\newtheorem{theorem}{Theorem}[section]
\newtheorem{corollary}{Corollary}[section]
\newtheorem{lemma}{Lemma}[section]
\theoremstyle{definition}
\newtheorem{definition}{Definition}[section]
\theoremstyle{plain}
\theoremstyle{remark}
\newtheorem{remark}{\it Remark}[section]
\renewcommand{\div}{\mathop\mathrm{div}}
\newcommand{\vphi}{\varphi}
\begin{document}
\title
[On the spectrum of the Stokes operator]
 {On the spectrum of the Stokes operator}
\author[A. Ilyin]{Alexei A. Ilyin}
\address
{Keldysh Institute of Applied Mathematics  RAS}
\email{ilyin@keldysh.ru}

\begin{abstract}
We prove Li--Yau-type lower bounds for the eigenvalues
of the Stokes operator and give applications to the
attractors of the Navier--Stokes equations.
\end{abstract}

\thanks
{
This work
was supported in part by the Russian Foundation for
Basic Research,
grant~nos.~06-001-0096 and 05-01-00429,
and by the
the RAS Programme no.1
`Modern problems of theoretical mathematics'
}


\keywords{Stokes operator, Navier--Stokes equations,
attractor dimension,
Lieb--Thirring inequalities.}

\maketitle
\setcounter{equation}{0}
\section{Introduction}\label{S:Intro}

The monotonically ordered eigenvalues
$\{\mu_k\}_{k=1}^\infty$ of the scalar Dirichlet
problem for the Laplacian in
a bounded domain $\Omega\subset\mathbb{R}^n$
$$
-\Delta\vphi_k=\mu_k\vphi_k,
\qquad \vphi_k\vert_{\partial\Omega}=0
$$
satisfy the classical H.Weyl asymptotic formula
$$
\mu_k\sim
\left(
\frac{(2\pi)^n}{\omega_n|\Omega|}
\right)^{2/n}k^{2/n}\quad\text{as $k\to\infty$},
$$
where $|\Omega|$ is the  $n$-dimensional Lebesgue measure
of $\Omega$ and $\omega_n=\pi^{n/2}/\Gamma(1+n/2)$ is the
volume of the unit ball in $\mathbb{R}^n$. This implies that
$$
\sum_{k=1}^m\mu_k\sim\frac n{2+n}
\left(
\frac{(2\pi)^n}{\omega_n|\Omega|}
\right)^{2/n}m^{1+2/n}\quad\text{as $m\to\infty$}.
$$
In fact,
\begin{equation}\label{Li--Yau}
\sum_{k=1}^m\mu_k\ge\frac n{2+n}
\left(
\frac{(2\pi)^n}{\omega_n|\Omega|}
\right)^{2/n}m^{1+2/n}\,.
\end{equation}
This remarkable sharp lower bound was proved in~\cite{Li-Yau}
and holds for  all \ $m=1,2,\dots$ and for any
domain with $|\Omega|<\infty$.

In this paper we prove   Li--Yau-type lower
bounds for the spectrum $\{\lambda_k\}_{k=1}^\infty$
 of the Stokes operator:
 \begin{equation}\label{Stokessmooth}
\aligned
&-\Delta\, v_k \,+\, \nabla\, p_k\,=\,\lambda_kv_k,\\
&\div v_k\,=\,0,\,\,\,v_k\vert_{\partial\Omega }\,=\,0,
\endaligned
\end{equation}
where $\Omega\subset\mathbb{R}^n$, $|\Omega|<\infty$,
$n\ge2$.
The asymptotic behavior of the eigenvalues is known
\cite{Babenko} ($n=3$), \cite{Metiv} ($n\ge2$):
\begin{equation}\label{Stokesassymp}
\lambda_k\sim
\left(
\frac{(2\pi)^n}{\omega_n(n-1)|\Omega|}
\right)^{2/n}k^{2/n}\quad\text{as $k\to\infty$}.
\end{equation}
The main result of this paper proved in
Section~\ref{S:Li--Yau}
 is the following
sharp lower bound for the spectrum of the Stokes operator:
\begin{equation*}
\sum_{k=1}^m\lambda_k\,\ge\,
\frac n{2+n}\left(
\frac{(2\pi)^n}{\omega_n(n-1)|\Omega|}
\right)^{2/n}m^{1+2/n}\,.
\end{equation*}
In addition, $\lambda_1>\mu_1$.
Then in Section~\ref{S:NS} we apply this bound with
$n=2$ and the Lieb--Thirring inequality with improved
constant to the estimates of the dimension of the attractors
of the Navier--Stokes system
with Dirichlet boundary conditions.

\setcounter{equation}{0}
\section{Li--Yau bounds for the spectrum of the Stokes
operator}\label{S:Li--Yau}

Throughout $\Omega$ is an open subset of $\mathbb{R}^n$
with finite $n$-dimensional Lebesgue measure $|\Omega|$:
$$
\Omega\subset\mathbb{R}^n,\ n\ge2,\qquad|\Omega|<\infty.
$$
We recall the basic facts in the theory of the Navier--Stokes
equations \cite{CF88,Lad,TNS,T}.
We denote by $\mathcal{V}$ the set of smooth
divergence-free vector functions with compact supports
$$
\mathcal{V}=\{u:\Omega\to\mathbb{R}^n,
\ u\in \mathbf{C}^\infty_0(\Omega),\
\div u=0\}
$$
and denote by $H$ and $V$ the closure of
$\mathcal{V}$ in  $\mathbf{L}_2(\Omega)$ and
$\mathbf{H}^1(\Omega)$, respectively. The
Helmholtz--Leray orthogonal projection $P$ maps
$\mathbf{L}_2(\Omega)$ onto $H$,
$P:\mathbf{L}_2(\Omega)\to H$.
We have (see~\cite{TNS})
$$
\aligned
&\mathbf{L}_2(\Omega)=H\oplus H^\perp,
\quad H^\perp=\{u\in\mathbf{L}_2(\Omega),
u=\nabla p,\ p\in L_2^{\mathrm{loc}}(\Omega)\},\\
&V\subseteq\{u\in\mathbf{H}^1_0(\Omega),\ \div u=0\},
\endaligned
$$
where  the last inclusion becomes equality
for a bounded $\Omega$ with Lipschitz boundary.

The Stokes operator $A$ is defined by the relation
\begin{equation}\label{Stokes}
(Au,v)=(\nabla u,\nabla v) \quad
\text{for all $u,v$ in $V$}
\end{equation}
and is an isomorphism between $V$ and $V'$.
For a sufficiently smooth $u$
$$
Au=-P\Delta u.
$$
The Stokes operator $A$ is
an unbounded self-adjoint positive operator in $H$
with compact inverse. It has a complete in $H$ and $V$
system of orthonormal eigenfunctions
$\{v_k\}_{k=1}^\infty\in V$
with corresponding eigenvalues
$\{\lambda_k\}_{k=1}^\infty$, $\lambda_k\to\infty$ as
$k\to\infty$:
\begin{equation}\label{spectrum}
Av_k=\lambda_kv_k, \quad
0<\lambda_1\le\lambda_2\le\dots\,.
\end{equation}
Taking the scalar product with $v_k$ we have by
orthonormality and~(\ref{Stokes}) that
\begin{equation}\label{lambdak}
    \lambda_k=\|\nabla v_k\|^2.
\end{equation}
In case when $\Omega$
is a bounded domain with smooth boundary the eigenvalue
problem~(\ref{spectrum})
goes over  to~(\ref{Stokessmooth}).

Our main goal is to prove uniform estimates for
the Fourier transforms of
orthonormal families of divergence-free vector functions
(see Lemma~\ref{l:n-1}).

Given a function $\vphi\in L_2(\Omega)$ we denote
by $\widehat\vphi(\xi)$ the Fourier transform
of its extension by zero outside $\Omega$:
$$
(\mathcal{F}\vphi)(\xi)=
\widehat\vphi(\xi)=\int e^{-i\xi x}\vphi(x)\,dx.
$$
\begin{lemma}\label{l:scal}
Let the family $\{\vphi_k\}_{k=1}^{m}$
be orthonormal in $L_2$:
$
(\vphi_k,\vphi_l)=\delta_{kl}.
$
Then
\begin{equation}\label{sum_scal}
\sum_{k=1}^m|\widehat\vphi_k(\xi)|^2\le|\Omega|.
\end{equation}
\end{lemma}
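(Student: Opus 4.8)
The plan is to exploit the Parseval/Bessel relationship between the $L_2$ inner product and its Fourier-transform counterpart. Fix a frequency $\xi\in\mathbb{R}^n$ and introduce the single test function $e_\xi(x)=e^{i\xi x}$ restricted to $\Omega$, viewed as an element of $L_2(\Omega)$. The crucial observation is that the Fourier transform evaluated at $\xi$ is, up to conjugation, exactly the $L_2(\Omega)$-inner product with this function:
\begin{equation*}
\widehat{\vphi_k}(\xi)=\int_\Omega e^{-i\xi x}\vphi_k(x)\,dx=(\vphi_k,e_\xi)_{L_2(\Omega)}.
\end{equation*}
Thus $|\widehat{\vphi_k}(\xi)|^2=|(\vphi_k,e_\xi)|^2$ is the squared modulus of the Fourier coefficient of $e_\xi$ with respect to the orthonormal system $\{\vphi_k\}$.

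Once this is in place, the inequality is an immediate consequence of Bessel's inequality. Since $\{\vphi_k\}_{k=1}^m$ is orthonormal in $L_2(\Omega)$, Bessel gives
\begin{equation*}
\sum_{k=1}^m|(\vphi_k,e_\xi)|^2\le\|e_\xi\|_{L_2(\Omega)}^2.
\end{equation*}
The final step is simply to compute the norm of $e_\xi$: because $|e^{i\xi x}|=1$ pointwise, $\|e_\xi\|_{L_2(\Omega)}^2=\int_\Omega|e^{i\xi x}|^2\,dx=\int_\Omega 1\,dx=|\Omega|$. Combining these yields $\sum_{k=1}^m|\widehat{\vphi_k}(\xi)|^2\le|\Omega|$ for every $\xi$, which is exactly~(\ref{sum_scal}).

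There is essentially no hard part here; the lemma is a clean packaging of Bessel's inequality, and the only thing to be careful about is the bookkeeping of complex conjugates in the definition of the inner product versus the Fourier transform, which does not affect the modulus. The reason this simple statement is worth isolating is that it is the scalar prototype for the genuinely substantive result, Lemma~\ref{l:n-1}, where one must control the Fourier transforms of a \emph{divergence-free} orthonormal family; there the pointwise bound $|\Omega|$ will be improved by a factor reflecting the $(n-1)$-dimensional constraint $\xi\cdot\widehat{v_k}(\xi)=0$ imposed by incompressibility, which is what ultimately produces the factor $(n-1)$ in the Stokes eigenvalue bound.
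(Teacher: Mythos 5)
Your proof is correct and is essentially the paper's own argument: the paper's proof expands the nonnegative integral $\int\bigl|e^{-i\xi x}-\sum_k\widehat{\vphi_k}(\xi)\vphi_k(x)\bigr|^2dx$, which is precisely the standard derivation of Bessel's inequality that you invoke directly, and the paper's Remark~\ref{Rem:interpretation} states explicitly that the lemma is Bessel's inequality applied to $h(x)=e^{-i\xi x}\vert_{x\in\Omega}$ with $\|h\|^2=|\Omega|$.
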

\begin{proof}
Denoting by $*$ the complex conjugate we have
by orthonormality
$$
\aligned
0\le\int\biggl(e^{-i\xi x}-
\sum_{k=1}^m\widehat{\vphi_k}(\xi)\vphi_k(x)\biggr)
\biggl(e^{-i\xi x}-
\sum_{l=1}^m\widehat{\vphi_l}(\xi)\vphi_l(x)\biggr)^*dx
=|\Omega|-\sum_{k=1}^m|\widehat{\vphi_k}(\xi)|^2.
\endaligned
$$
\end{proof}

\begin{remark}\label{Rem:interpretation}
{\rm
Inequality~\ref{sum_scal} is nothing other than
Bessel's inequality applied to the function
$h(x)=e^{-i\xi x}\vert_{x\in\Omega}$
with $\|h\|_{L_2}^2=|\Omega|$ and the orthonormal
family $\{\vphi_i(x)\}_{i=1}^{m}$ \cite{Li-Yau}.
}
\end{remark}

Next we observe that Lemma~\ref{l:scal} still holds
if we replace the orthonormality condition
by suborthonormality.

\begin{definition}\label{D:suborth}
A family $\{\vphi_i\}_{i=1}^m$ is called suborthonormal
if for any  $\zeta\in\mathbb{C}^m$
\begin{equation}\label{suborth}
\sum_{i,j=1}^m\zeta_i\zeta_j^*(\vphi_i,\vphi_j)\le
\sum_{j=1}^m|\zeta_j|^2.
\end{equation}
\end{definition}

\begin{remark}\label{Rem:suborthonormality}
{\rm
This convenient and flexible notion of suborthonormality was
introduced in~\cite{G-M-T} with real~$\zeta\in\mathbb{R}^m$
and is equivalent to the formally more general
Definition~\ref{D:suborth}.
}
\end{remark}
\begin{lemma}\label{l:scal_suborth}
Let the family $\{\vphi_k\}_{k=1}^{m}$
be suborthonormal.
Then
\begin{equation}\label{sum_scal_suborth}
\sum_{k=1}^m|\widehat{\vphi_k}(\xi)|^2\le|\Omega|.
\end{equation}
\end{lemma}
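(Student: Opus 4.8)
The plan is to reuse verbatim the Bessel-type computation from the proof of Lemma~\ref{l:scal}, substituting the single appeal to orthonormality by the suborthonormality bound~(\ref{suborth}). Write $c_k=\widehat{\vphi_k}(\xi)$, so that~(\ref{sum_scal_suborth}) is the assertion $\sum_{k=1}^m|c_k|^2\le|\Omega|$. As in Lemma~\ref{l:scal} I would begin from the manifestly non-negative quantity
$$
0\le\int\Bigl(e^{-i\xi x}-\sum_{k=1}^m c_k\vphi_k(x)\Bigr)\Bigl(e^{-i\xi x}-\sum_{l=1}^m c_l\vphi_l(x)\Bigr)^{*}dx
$$
and expand the integrand into the four usual groups of terms: the leading term $\int|e^{-i\xi x}|^2dx=|\Omega|$, the two linear cross terms, and the quadratic term $\sum_{k,l}c_k c_l^{*}(\vphi_k,\vphi_l)$.

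The point I would stress is that three of these four contributions are evaluated \emph{without} any use of orthonormality and therefore reproduce exactly the same expression $|\Omega|-2\sum_{k=1}^m|c_k|^2$ that appears in the proof of Lemma~\ref{l:scal}. The \emph{only} place where orthonormality was used there is in collapsing the quadratic term: for an orthonormal family $(\vphi_k,\vphi_l)=\delta_{kl}$ forces $\sum_{k,l}c_k c_l^{*}(\vphi_k,\vphi_l)=\sum_k|c_k|^2$. For a merely suborthonormal family this identity is lost, and instead I would estimate the quadratic term from above.

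The crux is simply to recognize that this quadratic term is \emph{precisely} the suborthonormality form of Definition~\ref{D:suborth} evaluated at the vector $\zeta=(c_1,\dots,c_m)$, i.e.\ $\zeta_i=c_i=\widehat{\vphi_i}(\xi)$. Inequality~(\ref{suborth}) then gives at once $\sum_{k,l}c_k c_l^{*}(\vphi_k,\vphi_l)\le\sum_{k=1}^m|c_k|^2$, and substituting this into the expansion yields $0\le|\Omega|-2\sum_k|c_k|^2+\sum_k|c_k|^2=|\Omega|-\sum_k|c_k|^2$, which is~(\ref{sum_scal_suborth}). There is no serious obstacle here: the whole argument is the observation that~(\ref{suborth}) is tailor-made to replace orthonormality in this single step, and indeed when $(\vphi_k,\vphi_l)=\delta_{kl}$ the bound~(\ref{suborth}) holds with equality, so the present estimate degenerates exactly to Lemma~\ref{l:scal}, confirming that suborthonormality is the correct weakening.
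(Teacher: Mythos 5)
Your proposal is correct and is essentially identical to the paper's own proof: the same Bessel-type expansion, with the quadratic term $\sum_{k,l}\widehat{\vphi_k}(\xi)\widehat{\vphi_l}(\xi)^*(\vphi_k,\vphi_l)$ bounded by $\sum_k|\widehat{\vphi_k}(\xi)|^2$ via~(\ref{suborth}) applied to $\zeta_k=\widehat{\vphi_k}(\xi)$. Nothing further is needed.
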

\begin{proof} As in Lemma~\ref{D:suborth}
with~(\ref{suborth}) instead of orthonormality we
have
$$
\aligned
0&\le\int\biggl(e^{-i\xi x}-
\sum_{k=1}^m\widehat{\vphi_k}(\xi)\vphi_k(x)\biggr)
\biggl(e^{-i\xi x}-
\sum_{l=1}^m\widehat{\vphi_l}(\xi)\vphi_l(x)\biggr)^*dx
=\\&=
|\Omega|-2\sum_{k=1}^m|\widehat{\vphi_k}(\xi)|^2+
\sum_{k,l=1}^m\widehat {\vphi_k}(\xi)\widehat {\vphi_l}(\xi)^*
(\vphi_k,\vphi_l)\le
|\Omega|-\sum_{k=1}^m|\widehat{\vphi_k}(\xi)|^2.
\endaligned
$$
\end{proof}

We now turn to orthonormal families of vector functions
$\{u_k\}_{k=1}^m$, $u_k=(u_k^1,\dots
u_k^n)$.
\begin{lemma}\label{l:vec_suborth}
Let the family of vector functions
$\{u_k\}_{k=1}^m$ be orthonormal
 in $\mathbf{L}_2(\Omega)$
and let $Q$ be an arbitrary orthogonal projection.
Then the family
$\{Qu_k\}_{k=1}^m$
 is suborthonormal.
\end{lemma}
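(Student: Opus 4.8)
The plan is to verify the defining inequality~(\ref{suborth}) for the projected family $\{Qu_k\}_{k=1}^m$ by testing it against a single auxiliary vector. First I would fix an arbitrary $\zeta=(\zeta_1,\dots,\zeta_m)\in\mathbb{C}^m$ and form the linear combination $w=\sum_{k=1}^m\zeta_k u_k\in\mathbf{L}_2(\Omega)$. Since the original family $\{u_k\}$ is orthonormal, expanding the squared norm gives $\|w\|^2=\sum_{i,j=1}^m\zeta_i\zeta_j^*(u_i,u_j)=\sum_{k=1}^m|\zeta_k|^2$, which is exactly the right-hand side of~(\ref{suborth}).

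The key step is to recognise the left-hand side of~(\ref{suborth}) for $\{Qu_k\}$ as the squared norm of the projection of $w$. By linearity $Qw=\sum_{k=1}^m\zeta_k Qu_k$, so expanding $(Qw,Qw)$ yields $\sum_{i,j=1}^m\zeta_i\zeta_j^*(Qu_i,Qu_j)$, precisely the quadratic form on the left of~(\ref{suborth}). The whole inequality thus reduces to the comparison $\|Qw\|^2\le\|w\|^2$. Finally I would invoke the elementary fact that an orthogonal projection is a contraction: from $Q=Q^*=Q^2$ we get $\|Qw\|^2=(Qw,Qw)=(Qw,w)\le\|Qw\|\,\|w\|$, whence $\|Qw\|\le\|w\|$. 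Chaining the three relations gives
$$
\sum_{i,j=1}^m\zeta_i\zeta_j^*(Qu_i,Qu_j)=\|Qw\|^2\le\|w\|^2=\sum_{k=1}^m|\zeta_k|^2,
$$
and since $\zeta$ was arbitrary this is~(\ref{suborth}) for $\{Qu_k\}$.

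I do not expect a genuine obstacle here: once the quadratic form is identified with $\|Qw\|^2$, the result is immediate from the contractivity of $Q$. The only points demanding care are the sesquilinear bookkeeping—placing the complex conjugate on the correct factor so that the cross terms match~(\ref{suborth}) term by term—and the observation that it is the orthonormality of $\{u_k\}$, and not of the projected $\{Qu_k\}$, that produces the clean bound $\sum_{k=1}^m|\zeta_k|^2$ on the right.
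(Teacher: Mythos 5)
Your proof is correct and takes essentially the same approach as the paper: both arguments amount to identifying the quadratic form $\sum_{i,j=1}^m\zeta_i\zeta_j^*(Qu_i,Qu_j)$ with $\|Qw\|^2$ for $w=\sum_{k=1}^m\zeta_k u_k$ and comparing it with $\|w\|^2=\sum_{k=1}^m|\zeta_k|^2$, which equals the right-hand side by orthonormality of the $u_k$. The only cosmetic difference is how contractivity of $Q$ is obtained: the paper uses the Pythagorean splitting $u_k=Qu_k+(I-Q)u_k$, so that the quadratic form for $\{Qu_k\}$ equals $\sum_k|\zeta_k|^2-\bigl\|\sum_k\zeta_k(I-Q)u_k\bigr\|^2$, while you use $Q=Q^*=Q^2$ together with the Cauchy--Schwarz inequality.
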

\begin{proof}
 We set $u_k=v_k+w_k$, $v_k=Qu_k$
and $w_k=(I-Q)u_k$.
Then $(v_k,w_l)=0$ for all $k,l=1,\dots,n$ and
$(u_k,u_l)=(v_k,v_l)+(w_k,w_l)$. Therefore
$$
\aligned
\sum_{k,l=1}^m\zeta_k\zeta_l^*(v_k,v_l)&=
\sum_{k,l=1}^m\zeta_k\zeta_l^*(u_k,u_l)
-
\sum_{k,l=1}^m\zeta_k\zeta_l^*(w_k,w_l)=\\&=
\sum_{k=1}^m|\zeta_k|^2-\bigl\|\sum\nolimits_{k=1}^m\zeta_kw_k\bigr\|^2
\le
\sum_{k=1}^m|\zeta_k|^2.
\endaligned
$$
\end{proof}
\begin{corollary} If  the family of vector functions
$\{u_k\}_{k=1}^m$ is orthonormal
in $\mathbf{L}_2$, then
\begin{equation}\label{n}
\sum_{k=1}^m|\widehat {u_k}(\xi)|^2\le n|\Omega|.
\end{equation}
\end{corollary}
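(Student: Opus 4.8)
The plan is to reduce the vector estimate to $n$ copies of the scalar estimate already established in Lemma~\ref{l:scal_suborth}, one for each Cartesian component. Writing $u_k=(u_k^1,\dots,u_k^n)$, the vector Fourier transform satisfies
$$
|\widehat{u_k}(\xi)|^2=\sum_{j=1}^n|\widehat{u_k^j}(\xi)|^2,
$$
so that
$$
\sum_{k=1}^m|\widehat{u_k}(\xi)|^2=\sum_{j=1}^n\sum_{k=1}^m|\widehat{u_k^j}(\xi)|^2,
$$
and it suffices to bound each inner sum by $|\Omega|$.

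The key step is to realize the selection of the $j$th component as an orthogonal projection on the vector space $\mathbf{L}_2(\Omega)$. For fixed $j$ I would set $Q_j u=(0,\dots,0,u^j,0,\dots,0)$, keeping only the $j$th slot. This $Q_j$ is idempotent and self-adjoint, hence an orthogonal projection, so Lemma~\ref{l:vec_suborth} applies and tells us that the family $\{Q_ju_k\}_{k=1}^m$ is suborthonormal. Since the vector inner product reduces to $(Q_ju_k,Q_ju_l)=(u_k^j,u_l^j)$, suborthonormality of $\{Q_ju_k\}$ is exactly the scalar suborthonormality condition~(\ref{suborth}) for the component family $\{u_k^j\}_{k=1}^m$. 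Applying Lemma~\ref{l:scal_suborth} to these scalar functions then gives $\sum_{k=1}^m|\widehat{u_k^j}(\xi)|^2\le|\Omega|$.

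Summing this over the $n$ coordinates $j=1,\dots,n$ immediately yields $\sum_{k=1}^m|\widehat{u_k}(\xi)|^2\le n|\Omega|$, where the factor $n$ is simply the number of components. There is no genuine obstacle here; the only points requiring care are checking that the coordinate selection $Q_j$ is truly an orthogonal projection in the vector $\mathbf{L}_2$ inner product (so that Lemma~\ref{l:vec_suborth} is applicable) and confirming that vector suborthonormality of $\{Q_ju_k\}$ descends to scalar suborthonormality of each component family. Alternatively, one can bypass the projection language and argue directly: by orthonormality $\sum_k|\zeta_k|^2=\|\sum_k\zeta_ku_k\|^2=\sum_{i=1}^n\|\sum_k\zeta_ku_k^i\|^2\ge\|\sum_k\zeta_ku_k^j\|^2$, which is precisely the suborthonormality of $\{u_k^j\}$.
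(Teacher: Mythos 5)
Your proof is correct and takes essentially the same route as the paper: the paper also deduces suborthonormality of each component family $\{u_k^j\}_{k=1}^m$ from Lemma~\ref{l:vec_suborth} (with the coordinate projection $Q_j$ left implicit) and then applies Lemma~\ref{l:scal_suborth} componentwise, summing over $j=1,\dots,n$. Your only addition is to spell out that $Q_j$ is an orthogonal projection and that vector suborthonormality of $\{Q_ju_k\}$ is the same as scalar suborthonormality of $\{u_k^j\}$, details the paper omits.
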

\begin{proof}
By Lemma~\ref{l:vec_suborth}   each
 family
$\{u_k^j\}_{k=1}^m$ is suborthonornal
 $j=1,\dots,n$,
and~(\ref{n}) follows from Lemma~\ref{l:scal_suborth}.
\end{proof}
The next lemma is the central point  in the proof
of the lower bounds for the spectrum
 and says that
under the divergence-free condition the
estimate~(\ref{n}) goes over to~(\ref{n-1}).
\begin{lemma}\label{l:n-1}
If the family of vector functions $\{u_k\}_{k=1}^m$ is
orthonormal and $u_k\in \mathbf{H}^1_0(\Omega)$,
$\div u_k=0$, $k=1,\dots,m$, then
\begin{equation}\label{n-1}
\sum_{k=1}^m|\widehat {u_k}(\xi)|^2\le (n-1)|\Omega|.
\end{equation}
\end{lemma}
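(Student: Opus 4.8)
The plan is to exploit the divergence-free condition directly in Fourier space. The key observation is that $\div u_k=0$, together with $u_k\in\mathbf{H}^1_0(\Omega)$ so that the extension by zero is still divergence-free on all of $\mathbb{R}^n$, translates under the Fourier transform into the pointwise relation $\xi\cdot\widehat{u_k}(\xi)=0$ for every $\xi$. Hence, for each fixed $\xi\ne0$, all the vectors $\widehat{u_k}(\xi)\in\mathbb{C}^n$ lie in the $(n-1)$-dimensional hyperplane orthogonal to $\xi$. This is precisely what should replace the ambient dimension $n$ in the preceding Corollary by $n-1$.

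First I would fix $\xi\ne0$ and choose a real orthonormal basis $e_1,\dots,e_{n-1}$ of the real hyperplane $\{\eta\in\mathbb{R}^n:\eta\cdot\xi=0\}$. Since $\xi$ is real, $\widehat{u_k}(\xi)$ is Hermitian-orthogonal to $\xi$ and therefore a complex combination of $e_1,\dots,e_{n-1}$, so that
$$
|\widehat{u_k}(\xi)|^2=\sum_{j=1}^{n-1}|e_j\cdot\widehat{u_k}(\xi)|^2
=\sum_{j=1}^{n-1}|\widehat{u_k^{(j)}}(\xi)|^2,
$$
where $u_k^{(j)}(x):=e_j\cdot u_k(x)$ is the scalar component of $u_k$ along $e_j$ and the last equality uses the linearity of the Fourier transform in the constant vector $e_j$.

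Next I would show that for each fixed $j$ the scalar family $\{u_k^{(j)}\}_{k=1}^m$ is suborthonormal, so that Lemma~\ref{l:scal_suborth} applies. To this end I would invoke Lemma~\ref{l:vec_suborth} with $Q$ the orthogonal projection onto the line spanned by $e_j$: the projected vector functions are $Qu_k=u_k^{(j)}e_j$, and since $|e_j|=1$ their $\mathbf{L}_2$ scalar products coincide with those of the scalars, $(Qu_k,Qu_l)=(u_k^{(j)},u_l^{(j)})$. Thus the suborthonormality of $\{Qu_k\}_k$ furnished by Lemma~\ref{l:vec_suborth} is exactly the suborthonormality of $\{u_k^{(j)}\}_k$.

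Finally, Lemma~\ref{l:scal_suborth} gives $\sum_{k=1}^m|\widehat{u_k^{(j)}}(\xi)|^2\le|\Omega|$ for each $j$, and summing the displayed identity over $j=1,\dots,n-1$ yields
$$
\sum_{k=1}^m|\widehat{u_k}(\xi)|^2
=\sum_{j=1}^{n-1}\sum_{k=1}^m|\widehat{u_k^{(j)}}(\xi)|^2\le(n-1)|\Omega|
$$
for every $\xi\ne0$; the case $\xi=0$ is immediate, since $\widehat{u_k}(0)=\int_\Omega u_k\,dx=0$ by integration by parts using $\div u_k=0$ and $u_k\in\mathbf{H}^1_0(\Omega)$. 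The one point that requires care—the main obstacle—is the justification of the Fourier relation $\xi\cdot\widehat{u_k}(\xi)=0$, i.e. that extension by zero preserves the divergence-free property distributionally on $\mathbb{R}^n$; this is exactly where the hypothesis $u_k\in\mathbf{H}^1_0(\Omega)$, rather than merely $\mathbf{H}^1(\Omega)$, enters.
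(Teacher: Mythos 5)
Your proof is correct, and it implements the key idea in a genuinely different way from the paper. Both arguments rest on the same observation: since the zero extension of a divergence-free $\mathbf{H}^1_0(\Omega)$ field is divergence-free on $\mathbb{R}^n$, one has $\xi\cdot\widehat{u_k}(\xi)=0$, so at each $\xi$ the vectors $\widehat{u_k}(\xi)$ live in an $(n-1)$-dimensional subspace. The paper exploits this only for $\xi$ on the first coordinate axis (where it forces $\widehat{u_k^1}(\xi_0)=0$, leaving $n-1$ coordinate components to estimate), and then reduces general $\xi$ to that case by rotating the domain and the functions: this requires checking that rotation preserves orthonormality and the divergence-free property, and that the Fourier transform intertwines with rotations via $\widehat{u_\rho}(\xi)=\rho\widehat{u}(\rho^{-1}\xi)$. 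You avoid all of that bookkeeping by keeping the domain fixed and adapting the frame to $\xi$: you choose a real orthonormal basis $e_1,\dots,e_{n-1}$ of $\xi^\perp$, note that $|\widehat{u_k}(\xi)|^2=\sum_{j=1}^{n-1}|e_j\cdot\widehat{u_k}(\xi)|^2$ (valid for the Hermitian norm precisely because $\xi$ is real, as you observe), and apply Lemma~\ref{l:vec_suborth} with $Q$ the pointwise projection onto $\mathrm{span}(e_j)$ --- which is indeed an orthogonal projection on $\mathbf{L}_2(\Omega)$, and whose image has the same Gram quadratic form as the scalar family $\{e_j\cdot u_k\}_k$ --- followed by Lemma~\ref{l:scal_suborth}. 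This is shorter and more intrinsic; the paper's version has the mild advantage of only ever invoking coordinate projections, at the cost of the explicit rotation computations.

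One small caveat at $\xi=0$: your claim $\widehat{u_k}(0)=\int_\Omega u_k\,dx=0$ is true, but the justification ``by integration by parts'' is loose when $\Omega$ is unbounded with $|\Omega|<\infty$ (which the paper allows), since $x_i$ is then unbounded on $\Omega$. The clean fix is the paper's: $u_k\in L_1$ because $|\Omega|<\infty$, so $\widehat{u_k}$ is continuous, and the inequality (or, if you prefer, the vanishing of $\widehat{u_k}(0)$, by letting $\xi\to0$ along each direction in $\xi\cdot\widehat{u_k}(\xi)=0$) extends to $\xi=0$ by continuity.
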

\begin{proof}
We first observe that for all $\xi\in \mathbb{R}^n_\xi$
$$
\xi\cdot \widehat {u_k}(\xi)=
\xi\cdot\int e^{-i\xi x}\,u_k(x)\,dx=
i\int u_k\cdot\nabla_xe^{-i\xi x}\,dx
=-i\int e^{-i\xi x}\div u_k\,dx=0.
$$
Let $\xi_0\ne0$ be of the form:
\begin{equation}\label{xi0}
\xi_0=(a,0,\dots,0), \qquad a\ne0.
\end{equation}
Since $\xi_0\cdot \widehat {u_k}(\xi_0)=0$, it follows that
$\widehat u_k^1(\xi_0)=0$ for  $k=1,\dots,m$, which
in view of Lemmas~\ref{l:vec_suborth} and \ref{l:scal_suborth}
proves the estimate~(\ref{n-1})
for $\xi$ of the form~(\ref{xi0}):
$$
\sum_{k=1}^m|\widehat {u_k}(\xi_0)|^2=
\sum_{j=2}^n\sum_{k=1}^m|\widehat {u}_k^j(\xi_0)|^2
\le (n-1)|\Omega|.
$$

The general case reduces to the case~(\ref{xi0}) by the
corresponding rotation. Let $\rho$ be a rotation of
$\mathbb{R}^n$
about the origin represented by the
orthogonal $(n\times n)-$matrix $\rho$ with
entries $\rho_{ij}$.
Given a vector function $u(x)=(u^1(x),\dots,u^n(x))$ we
consider the vector function
$$
u_\rho(x):=\rho\, u(\rho^{-1}x),
\qquad x\in\rho\Omega.
$$
Let us calculate the divergence of $u_\rho(x)$.
Setting $\rho^{-1}x=y$,
$y_l=\sum_{k}(\rho^{-1})_{lk}\,x_k$
 we have
$$
\frac{\partial u_\rho^i(x)}{\partial x_i}=
\frac{\partial }{\partial x_i}
\biggl(
\sum_{j}\rho_{ij}u^j(y)
\biggr)=
\sum_j\rho_{ij}\sum_l\frac{\partial u^j(y)}{\partial y_l}
\frac{\partial y_l}{\partial x_i}=
\sum_j\rho_{ij}\sum_l\frac{\partial u^j(y)}{\partial y_l}
(\rho^{-1})_{li}.
$$
Therefore
$$
\div u_\rho(x)=\sum_{i,j,l}
\rho_{ij}\frac{\partial u^j(y)}{\partial y_l}
(\rho^{-1})_{li}=
\sum_{j,l}\frac{\partial u^j(y)}{\partial y_l}\sum_i
(\rho^{-1})_{li}\rho_{ij}=\div u(y).
$$
In addition,
$$
(u_\rho,v_\rho)=
\int\rho u(\rho^{-1}x)\cdot\rho v(\rho^{-1}x)\,dx=
\int u(\rho^{-1}x)\cdot v(\rho^{-1}x)\,dx=
\int u(y)\cdot v(y)\,dy=(u,v).
$$
Combining this we obtain that the family
$\{(u_k)_\rho\}_{k=1}^m$ belongs to
$\mathbf{H}^1_0(\rho\Omega)$,
is orthonormal and $\div (u_k)_\rho=0$.

Next we calculate $\widehat{u_\rho}$
and show that
\begin{equation}\label{commut}
\widehat{u_\rho}(\xi)=
\rho\widehat{u}(\rho^{-1}\xi).
\end{equation}
In fact,
$$
\aligned
(\mathcal{F}u_\rho)(\xi)=
\widehat{u_\rho}(\xi)=
\int e^{i\xi\cdot x}u_\rho(x)\,dx=
\rho\int e^{i\xi\cdot x}u(\rho^{-1}x)\,dx=\\
\rho\int e^{i\xi\cdot\rho y}u(y)\,dy=
\rho\int e^{i\rho^{-1}\xi\cdot y}u(y)\,dy=
\rho\widehat{u}(\rho^{-1}\xi).
\endaligned
$$

We now fix an arbitrary
$\xi\in\mathbb{R}^n$, $\xi\ne0$ and
set
$\xi_0=(|\xi|,0,\dots,0)$. Let $\rho$ be the
 rotation such that $\xi=\rho^{-1}\xi_0$.
Then we have
$$
\sum_{k=1}^m|\widehat {u_k}(\xi)|^2=
\sum_{k=1}^m|\widehat {u_k}(\rho^{-1}\xi_0)|^2=
\sum_{k=1}^m|\rho^{-1}\widehat {(u_k)_\rho}(\xi_0)|^2=
\sum_{k=1}^m|\widehat {(u_k)_\rho}(\xi_0)|^2
\le (n-1)|\Omega|,
$$
where we have used~(\ref{commut}) and the fact that
inequality~(\ref{n-1}) has been proved for $\xi$ of
the form~(\ref{xi0}) for any orthonormal family
of divergence-free vector functions.
Finally, the estimate (\ref{n-1}) is extended to $\xi=0$ by
continuity (observe that $u_k\in L_1$ since
$|\Omega|<\infty$ and hence the
Fourier transforms $\widehat{u_k}$
are continuous.)
\end{proof}
\begin{remark}\label{Rem:inH}
{\rm
In fact, (\ref{n-1}) holds
under milder assumption that
$u_k\in H$, $k=1,\dots,m$.
}
\end{remark}

We need the following lemma from~\cite{Li-Yau},
whose proof we give for the sake of completeness.
\begin{lemma}\label{l:ineq}{\rm (See \cite{Li-Yau}.)}
 Let a function $f(\xi )$, $f:\mathbb{R}^n\to\mathbb{R}$
  satisfy
$$
0\le f(\xi)\le M_1 \qquad\text{and}\qquad
\int|\xi|^2f(\xi)d\xi\le M_2.
$$
  Then
\begin{equation}\label{multineq}
\int f(\xi )\,d\xi\,\le\,(M_1\omega_n)^{ 2/(2+n)}
(M_2(2+n)/n )^{ n/(2+n)}.
\end{equation}
\end{lemma}
\begin{proof}
We first observe that (\ref{multineq}) turns
into equality for a constant multiple of the
characteristic function $g(\xi)$
of any ball centered at the origin in $\mathbb{R}^n$.
We set
$$
g(\xi)=\left\{
\begin{array}{ll}
    M_1    , & |\xi|\le R, \\
    0, & |\xi|>R.\\
\end{array}
\right.
$$
Then
$(|\xi|^2-R^2)(f(\xi)-g(\xi))\ge 0$ so that
$$
R^2\int(f(\xi)-g(\xi)d\xi\le
\int|\xi|^2(f(\xi)-g(\xi)d\xi\le0,
$$
where the second inequality holds
provided that $R$ is defined by the equality
$$
\int|\xi|^2 g(\xi)d\xi=M_2.
$$
Hence
$$
\int f(\xi )\,d\xi\,\le
\int g(\xi )\,d\xi\,=\,(M_1\omega_n)^{ 2/(2+n)}
(M_2(2+n)/n )^{ n/(2+n)}.
$$
\end{proof}

We can now formulate our main results.
\begin{theorem}\label{T:main}
Suppose that the family of vector functions
$\{u_k\}_{k=1}^m\in \mathbf{H}^1_0(\Omega)$ is orthonormal
and, in addition, $\div u_k=0$, $k=1,\dots,m$. Then
\begin{equation}\label{lowerorth}
\sum_{k=1}^m\|\nabla u_k\|^2\,\ge\,
\frac n{2+n}\left(
\frac{(2\pi)^n}{\omega_n(n-1)|\Omega|}
\right)^{2/n}m^{1+2/n}\,.
\end{equation}
\end{theorem}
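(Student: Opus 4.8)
The plan is to run the classical Li--Yau argument, now fueled by the sharpened Fourier bound of Lemma~\ref{l:n-1}. The central object is the spectral function
$$
f(\xi)=\frac1{(2\pi)^n}\sum_{k=1}^m|\widehat{u_k}(\xi)|^2,
$$
which I will feed into the optimization Lemma~\ref{l:ineq}. To do so I must identify the two constants $M_1$ and $M_2$ governing that lemma and recognize $\int f\,d\xi$ as exactly $m$.

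First I would record the two consequences of the Plancherel identity for the convention $\widehat\vphi(\xi)=\int e^{-i\xi x}\vphi(x)\,dx$, namely $\int|\widehat{u_k}(\xi)|^2\,d\xi=(2\pi)^n\|u_k\|^2$ and $\int|\xi|^2|\widehat{u_k}(\xi)|^2\,d\xi=(2\pi)^n\|\nabla u_k\|^2$. Summing over $k$ and invoking orthonormality ($\|u_k\|^2=1$) gives
$$
\int f(\xi)\,d\xi=m,\qquad
\int|\xi|^2f(\xi)\,d\xi=\sum_{k=1}^m\|\nabla u_k\|^2=:M_2.
$$
Next, Lemma~\ref{l:n-1} bounds the summed Fourier transforms pointwise, so that $0\le f(\xi)\le (n-1)|\Omega|/(2\pi)^n=:M_1$. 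These are precisely the hypotheses of Lemma~\ref{l:ineq}, the improvement over~(\ref{n}) to~(\ref{n-1}) being exactly what will produce the factor $(n-1)$ in the final constant.

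Applying Lemma~\ref{l:ineq} then yields
$$
m=\int f\,d\xi\le(M_1\omega_n)^{2/(2+n)}\bigl(M_2(2+n)/n\bigr)^{n/(2+n)}.
$$
The remaining step is purely algebraic: I would raise this to the power $(2+n)/n$ and solve for $M_2$, using that the exponents combine as $\tfrac{2+n}{n}=1+\tfrac2n$ and $\tfrac{2/(2+n)}{n/(2+n)}=\tfrac2n$. Substituting $M_1\omega_n=\omega_n(n-1)|\Omega|/(2\pi)^n$ and inverting the power gives exactly
$$
M_2\ge\frac n{2+n}\Bigl(\tfrac{(2\pi)^n}{\omega_n(n-1)|\Omega|}\Bigr)^{2/n}m^{1+2/n},
$$
which is~(\ref{lowerorth}).

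I do not anticipate a genuine obstacle at this stage: all the analytic difficulty has been front-loaded into Lemma~\ref{l:n-1}, whose divergence-free rotation argument is what replaces the generic factor $n$ by the improved $n-1$. The only point demanding care is the bookkeeping of the $(2\pi)^n$ normalization coming from Plancherel and the matching of the exponents, so that the constant emerging from Lemma~\ref{l:ineq} coincides precisely with the Weyl-type constant appearing in the asymptotics~(\ref{Stokesassymp}) rather than merely approximating it.
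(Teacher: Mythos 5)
Your proposal is correct and is essentially identical to the paper's own proof: both feed the function $\sum_k|\widehat{u_k}(\xi)|^2$ (you merely normalize it by $(2\pi)^{-n}$) into Lemma~\ref{l:ineq}, using Lemma~\ref{l:n-1} for the pointwise bound and Plancherel for the two integrals, and then solve for $\sum_k\|\nabla u_k\|^2$. The bookkeeping of the $(2\pi)^n$ factors works out the same either way, and your exponent algebra reproduces the constant in~(\ref{lowerorth}) exactly.
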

\begin{proof} We set
$$
f(\xi)=\sum_{k=1}^m|\widehat{u_k}(\xi)|^2.
$$
By Lemma~\ref{l:n-1} and the Plancherel theorem
$f$ satisfies\begin{enumerate}
    \item  $0\le f(\xi)\le(n-1)|\Omega|$;
    \item  $\int f(\xi)\,d\xi=(2\pi)^nm$;
    \item  $\int |\xi|^2f(\xi)\,d\xi=(2\pi)^n
            \sum_{k=1}^m\|\nabla u_k\|^2$.
\end{enumerate}
Using Lemma~\ref{l:ineq} we find that
$$
(2\pi)^nm=\int f(\xi)\,d\xi\le
\biggl((n-1)|\Omega|\,\omega_n\biggr)^{2/(2+n)}
\biggl((2\pi)^n\sum_{k=1}^m\|\nabla u_k\|^2
(2+n)/n\biggr)^{n/(2+n)},
$$
which is~(\ref{lowerorth}).
\end{proof}

\begin{theorem}\label{Th:main}
The eigenvalues $\lambda_k$ of the Stokes
operator satisfy the following lower bound:
\begin{equation}\label{lowerStokes}
\sum_{k=1}^m\lambda_k\,\ge\,
\frac n{2+n}\left(
\frac{(2\pi)^n}{\omega_n(n-1)|\Omega|}
\right)^{2/n}m^{1+2/n}\,.
\end{equation}
\end{theorem}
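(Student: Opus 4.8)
The plan is to apply Theorem~\ref{T:main} directly to the first $m$ eigenfunctions $\{v_k\}_{k=1}^m$ of the Stokes operator. All of the analytical work has already been done: the divergence-free improvement of the pointwise Fourier bound from $n|\Omega|$ to $(n-1)|\Omega|$ (Lemma~\ref{l:n-1}), the Li--Yau rearrangement inequality (Lemma~\ref{l:ineq}), and their combination via the Plancherel theorem in Theorem~\ref{T:main}. What remains is purely to check that the eigenfunctions meet the hypotheses of that theorem and to rewrite $\sum_k\lambda_k$ in terms of the Dirichlet energies $\|\nabla v_k\|^2$.

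First I would record the structural facts recalled at the start of the section. By the spectral decomposition~(\ref{spectrum}) the system $\{v_k\}_{k=1}^\infty$ is orthonormal in $H$, and since $H\subset\mathbf{L}_2(\Omega)$ it is orthonormal in $\mathbf{L}_2(\Omega)$ as well. Moreover each eigenfunction lies in $V\subseteq\{u\in\mathbf{H}^1_0(\Omega):\div u=0\}$, so $v_k\in\mathbf{H}^1_0(\Omega)$ and $\div v_k=0$ for every $k$. Hence for each fixed $m$ the truncated family $\{v_k\}_{k=1}^m$ satisfies exactly the assumptions of Theorem~\ref{T:main}. Invoking that theorem and then using the identity~(\ref{lambdak}), namely $\lambda_k=\|\nabla v_k\|^2$, I would write
$$
\sum_{k=1}^m\lambda_k=\sum_{k=1}^m\|\nabla v_k\|^2\ge
\frac n{2+n}\left(\frac{(2\pi)^n}{\omega_n(n-1)|\Omega|}\right)^{2/n}m^{1+2/n},
$$
which is precisely~(\ref{lowerStokes}).

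There is in truth no substantive obstacle remaining: the assertion is a corollary of Theorem~\ref{T:main}, with the divergence-free condition being the single feature responsible for the gain of the factor $(n-1)$ over the Laplacian. The only point that merits a moment's attention is the membership $v_k\in\mathbf{H}^1_0(\Omega)$ rather than merely $v_k\in V$; this is furnished by the inclusion $V\subseteq\{u\in\mathbf{H}^1_0(\Omega):\div u=0\}$ (an equality for Lipschitz domains), and it is exactly what guarantees that $\|\nabla v_k\|$ is finite and equal to $\lambda_k$. I note that by Remark~\ref{Rem:inH} the underlying Fourier estimate~(\ref{n-1}) persists under the weaker hypothesis $v_k\in H$, so the argument is robust and does not rely on any boundary regularity beyond what is needed to make the Dirichlet energy meaningful.
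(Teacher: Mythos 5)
Your proposal is correct and coincides with the paper's own proof: both apply Theorem~\ref{T:main} to the first $m$ Stokes eigenfunctions, using the inclusion $V\subseteq\{u\in\mathbf{H}^1_0(\Omega):\div u=0\}$ to verify the hypotheses and the identity~(\ref{lambdak}) to pass from $\sum_k\|\nabla v_k\|^2$ to $\sum_k\lambda_k$. The extra details you supply (orthonormality in $\mathbf{L}_2$, the role of Remark~\ref{Rem:inH}) are accurate elaborations of the same argument.
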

\begin{proof}
Since $ V\subseteq\{u\in\mathbf{H}^1_0(\Omega),\
\div u=0\}$ we can chose  the first $m$ eigenvectors
 for the $u_k$'s in~(\ref{lowerorth})
and taking into account~(\ref{lambdak})
we obtain~(\ref{lowerStokes}).
\end{proof}
\begin{remark}\label{Rem:sharp}
{\rm
In view of the asymptotics~(\ref{Stokesassymp})
this lower bound is sharp in the sense that
the inequality with
the coefficient of $m^{1+2/n}$  larger
than in~(\ref{lowerStokes})
cannot hold for a sufficiently large $m$.
}
\end{remark}
\begin{remark}\label{Rem:nonsharp}
{\rm
Weaker lower bounds based on the estimate~(\ref{n})
$$
\sum_{k=1}^m\lambda_k\,\ge\,
\frac n{2+n}\left(
\frac{(2\pi)^n}{\omega_nn|\Omega|}
\right)^{2/n}m^{1+2/n}\,
$$
have earlier been proved in~\cite{I96} for $n=2,3$.
}
\end{remark}
\begin{remark}\label{Rem:maxmin}
{\rm
In fact, for any orthonormal family
$\{u_k\}_{k=1}^m\in V$
we have
$$
\sum_{k=1}^m\|\nabla u_k\|^2\,\ge\,
\sum_{k=1}^m\lambda_k.
$$
}
\end{remark}

\begin{corollary}\label{Cor:individual}
Each eigenvalue $\lambda_k$ satisfies
\begin{equation}\label{each}
   \lambda_k\,\ge\,
   \frac n{2+n}\left(
\frac{(2\pi)^n}{\omega_n(n-1)|\Omega|}
\right)^{2/n}k^{2/n}\,,
\end{equation}
while $\lambda_1$ satisfies
\begin{equation}\label{first}
\lambda_1>\mu_1\ge\frac n{2+n}
\left(
\frac{(2\pi)^n}{\omega_n|\Omega|}
\right)^{2/n}\,.
\end{equation}
\end{corollary}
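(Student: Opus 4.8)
The plan is to deduce all three inequalities from results already in hand, treating them separately.

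For the individual bound \eqref{each} I would combine the monotone ordering $\lambda_1\le\lambda_2\le\dots$ with the summed estimate \eqref{lowerStokes}. Since $\lambda_j\le\lambda_k$ for every $j\le k$, we have $k\lambda_k\ge\sum_{j=1}^k\lambda_j$, and applying \eqref{lowerStokes} with $m=k$ gives
$$
k\lambda_k\ge\frac n{2+n}\left(\frac{(2\pi)^n}{\omega_n(n-1)|\Omega|}\right)^{2/n}k^{1+2/n},
$$
so dividing by $k$ yields \eqref{each}. The lower bound for $\mu_1$ in \eqref{first} is equally immediate: applying the Li--Yau inequality \eqref{Li--Yau} with $m=1$ gives $\mu_1=\sum_{k=1}^1\mu_k\ge\frac n{2+n}\bigl((2\pi)^n/(\omega_n|\Omega|)\bigr)^{2/n}$. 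Both of these steps are routine.

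The substantive part is the \emph{strict} inequality $\lambda_1>\mu_1$. I would start from the variational characterizations, writing the Stokes ground state as $v_1=(v_1^1,\dots,v_1^n)$ with $v_1\in V$, $\|v_1\|=1$, $\lambda_1=\|\nabla v_1\|^2$, and each component $v_1^j\in\mathbf{H}^1_0(\Omega)$. Applying the scalar Rayleigh bound $\|\nabla\vphi\|^2\ge\mu_1\|\vphi\|^2$ componentwise and summing gives
$$
\lambda_1=\sum_{j=1}^n\|\nabla v_1^j\|^2\ge\mu_1\sum_{j=1}^n\|v_1^j\|^2=\mu_1\|v_1\|^2=\mu_1,
$$
so only $\lambda_1=\mu_1$ must be excluded. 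Equality here forces each nonzero component $v_1^j$ to realize equality in the scalar bound, i.e.\ to be a ground state of the Dirichlet Laplacian. On a connected domain $\mu_1$ is simple with $\vphi_1>0$, so $v_1^j=c_j\vphi_1$ and hence $v_1=c\,\vphi_1$ for a constant vector $c=(c_1,\dots,c_n)\in\mathbb{R}^n$. Then the divergence-free condition reads $0=\div v_1=c\cdot\nabla\vphi_1$, which forces $\vphi_1$ to be constant along the direction $c$; extending $\vphi_1$ by zero to a function in $\mathbf{H}^1(\mathbb{R}^n)$ and noting that a nonzero $L_2$ function cannot be constant along a line, we get $c=0$, whence $v_1=0$, contradicting $\|v_1\|=1$. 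Therefore the inequality is strict.

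The main obstacle is exactly this rigidity argument in the equality case. On a connected domain the simplicity and positivity of $\vphi_1$ make it clean; for a general $\Omega$ with $|\Omega|<\infty$ one reduces to the connected components. I expect the delicate point to be the final $L_2$ step ruling out $c\cdot\nabla\vphi_1\equiv0$, since it must be run through the extension by zero and the boundary information encoded in $\mathbf{H}^1_0(\Omega)$ rather than through pointwise values.
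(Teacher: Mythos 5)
Your proposal is correct and follows essentially the same route as the paper: the bound \eqref{each} from monotonicity plus \eqref{lowerStokes}, the bound for $\mu_1$ from \eqref{Li--Yau} with $m=1$, and the strict inequality $\lambda_1>\mu_1$ by exactly the paper's rigidity argument (equality forces each component of the Stokes ground state to be a multiple of the simple Dirichlet ground state $\vphi_1$, and then $\div v_1=c\cdot\nabla\vphi_1=0$ would make $\vphi_1$ constant along a direction, which is impossible for a nonzero function in $\mathbf{H}^1_0(\Omega)$ with $|\Omega|<\infty$). Your version is if anything slightly more careful than the paper's, which states the impossibility step without the extension-by-zero justification and tacitly assumes $\Omega$ connected for the simplicity of $\mu_1$.
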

\begin{proof}
The sequence $\{\lambda_k\}_{k=1}^\infty$
is nondecreasing and (\ref{each}) is obvious.
Since $V\subset \mathbf{H}^1_0(\Omega)$,
$$
\mu_1=\min_{u\in\mathbf{H}^1_0(\Omega)}
\frac{\|\nabla u\|^2}{\|u\|^2}\le
\min_{u\in V}
\frac{\|\nabla u\|^2}{\|u\|^2}=\lambda_1
$$
and the second inequality in~(\ref{first})
is~(\ref{Li--Yau}) with $m=1$.
Let us prove that $\lambda_1>\mu_1$. Suppose that
$
\mu_1={\|\nabla u_0\|^2}/{\|u_0\|^2}$
for some   $\ u_0\in \mathbf{H}^1_0(\Omega)$.
It is well known that $\mu_1$ is a simple eigenvalue
with unique (up to a constant factor)
eigenfunction $\vphi_1$. Therefore
any such $u_0$ is of the form
$
u_0(x)=(l_1\vphi_1(x),l_2\vphi_1(x),\dots,l_n\vphi_1(x))
$
for some constants $l_1,\dots,l_n$, $|l|>0$.
(Without loss of generality we can assume that
$|l|=1$.)
Now $\lambda_1=\mu_1$ if and only if
$u_0$ so obtained satisfies, in addition, $\div u_0=0$.
Therefore
$\frac{\partial\vphi_1}{\partial l}=\div u_0=0$,
and $\vphi_1$ is constant along the lines parallel to $l$,
which is impossible.
\end{proof}

\setcounter{equation}{0}
\section{Applications to the
Navier--Stokes system}\label{S:NS}

We write the two-dimensional Navier--Stokes
system as an evolution equation in $H$
\begin{equation}\label{NSE}
\partial_t u\,+\,\nu A\,u\,+\,B(u,u)\,=\,f,\quad
u(0)=u_0,
\end{equation}
where $A=-P\Delta$ is the Stokes operator and
$B(u,v)=P(\sum_{i=1}^2u^i\partial_iv)$.
The equation (\ref{NSE}) generates the semigroup
$S_t:H\to H$, $S_tu_0=u(t)$, which
 has a compact global attractor
$\mathcal{A}\Subset H$ (see, for instance,
\cite{B-V},\cite{CF88},\cite{FMRT},\cite{T} for the case
of a domain with
 smooth boundary $\partial\Omega$, and
\cite{LadNon},\cite{Rosa} for a nonsmooth domain).
The attractor $\mathcal{A}$ is the maximal
strictly invariant compact  set.

\begin{theorem}\label{T:dim}
The fractal dimension of  $\mathcal{A}$ satisfies the
following estimate
\begin{equation}\label{dim}
\dim_F\mathcal{A}\le
\frac1{(8\sqrt{3}\,\pi)^{1/2}}\,(\lambda_1|\Omega|)^{1/2}
\frac{\|f\|}{\lambda_1\nu^2}\,<\,
\frac1{4\pi 3^{1/4}}\,\frac{\|f\||\Omega|}{\nu^2}\,.
\end{equation}
\end{theorem}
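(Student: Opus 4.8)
The plan is to bound the fractal dimension of the attractor by the standard trace method: one estimates the global Lyapunov exponents via the trace of the linearized operator projected onto an $m$-dimensional subspace, and the dimension bound is obtained from the value of $m$ at which this trace first becomes negative. First I would recall that for the two-dimensional Navier--Stokes system the dimension is controlled by the quantity
\begin{equation*}
q_m=\limsup_{t\to\infty}\sup_{u_0\in\mathcal{A}}\,
\frac1t\int_0^t\sup_{\{\phi_j\}}
\sum_{j=1}^m\bigl(-\nu\|\nabla\phi_j\|^2
-(B(\phi_j,u(s)),\phi_j)\bigr)\,ds,
\end{equation*}
where the inner supremum is over orthonormal families $\{\phi_j\}_{j=1}^m$ in $V$; the fractal dimension is at most any $m$ for which $q_m<0$.

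Next I would control the two contributions separately. The antisymmetry of the nonlinear term reduces $\sum_j(B(\phi_j,u),\phi_j)$ to an expression involving $\rho(x)=\sum_{j=1}^m|\phi_j(x)|^2$, and one estimates $\sum_j|(B(\phi_j,u),\phi_j)|\le c\,\|\nabla u\|\,\|\rho\|$ by Cauchy--Schwarz. The essential ingredient is then the Lieb--Thirring inequality bounding $\|\rho\|^2=\int\rho^2\,dx$ in terms of $\sum_j\|\nabla\phi_j\|^2$ for an orthonormal divergence-free family; it is precisely here that the improved Lieb--Thirring constant advertised in the introduction enters, and the constant $8\sqrt3$ in~(\ref{dim}) should trace back to that value. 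For the viscous term I would invoke Theorem~\ref{Th:main} with $n=2$, which gives
\begin{equation*}
\sum_{j=1}^m\|\nabla\phi_j\|^2\ge\sum_{k=1}^m\lambda_k
\ge\frac{4\pi}{|\Omega|}\cdot\frac{m^2}{2}
=\frac{2\pi m^2}{|\Omega|},
\end{equation*}
the sharp Stokes Li--Yau bound in two dimensions. This is the step that makes the present estimate stronger than earlier ones using~(\ref{n}): the factor $(n-1)=1$ rather than $n=2$ in the denominator improves the lower bound on $\sum\lambda_k$ and hence sharpens the dimension estimate.

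Assembling these, the integrand is bounded by a quadratic-in-$m$ viscous gain against a lower-order loss from the forcing, so after time-averaging and using that $\nu\|\nabla u\|$ is controlled on the attractor by $\|f\|$ through the energy balance, the condition $q_m<0$ becomes an inequality of the form $m\gtrsim(\lambda_1|\Omega|)^{1/2}\|f\|/(\lambda_1\nu^2)$; choosing $m$ at the threshold yields the first bound in~(\ref{dim}). The final strict inequality follows by inserting the lower bound $\lambda_1\ge 4\pi/|\Omega|$ from Corollary~\ref{Cor:individual} (with $n=2$, so $\lambda_1\ge\frac{2\pi}{|\Omega|}\cdot\frac{2}{2}\cdot\ldots$) to eliminate $\lambda_1$ and simplify the numerical constant to $1/(4\pi3^{1/4})$. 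The main obstacle I anticipate is the careful bookkeeping of constants: one must combine the precise Lieb--Thirring constant with the sharp Stokes eigenvalue bound and the optimization over $m$ so that the two stated numerical constants come out exactly, and in particular verify that using $\lambda_1\ge4\pi/|\Omega|$ is what converts the $\lambda_1$-dependent bound into the clean $\lambda_1$-free form.
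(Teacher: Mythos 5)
Your overall strategy coincides with the paper's: the trace formula for the sums of global Lyapunov exponents, reduction of the nonlinear term to $2^{-1/2}\|\rho\|\,\|\nabla u\|$, the Lieb--Thirring inequality~(\ref{L-T}) with the improved constant $c_{\mathrm{LT}}\le 1/(2\sqrt{3})$, the sharp $n=2$ spectral lower bound for the viscous term, the energy estimate $\limsup_{t\to\infty}\sup_{u_0\in\mathcal{A}}\frac1t\int_0^t\|\nabla u(\tau)\|^2d\tau\le \|f\|^2/(\lambda_1\nu^2)$, and the choice of $m$ at the zero-crossing of $q(m)$. The only structural difference is that you pass through the eigenvalues, writing $\sum_j\|\nabla\phi_j\|^2\ge\sum_{k=1}^m\lambda_k$ and then invoking Theorem~\ref{Th:main}, whereas the paper applies Theorem~\ref{T:main} (the orthonormal-family version of the bound) directly to the family $\{\phi_j\}_{j=1}^m\in V$. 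Your detour is legitimate but needs the variational fact stated (without proof) in Remark~\ref{Rem:maxmin}; the direct route avoids it, and both yield the same bound $2\pi m^2/|\Omega|$.

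Your final step, however, contains a concrete error. Corollary~\ref{Cor:individual} with $n=2$ gives $\lambda_1>\mu_1\ge\frac{2}{4}\cdot\frac{(2\pi)^2}{\pi|\Omega|}=2\pi/|\Omega|$, not $\lambda_1\ge 4\pi/|\Omega|$: you dropped the factor $n/(n+2)=1/2$ that you correctly retained two lines earlier when bounding $\sum_k\lambda_k$ (the same formula at $m=1$ gives $2\pi/|\Omega|$). This is precisely the bookkeeping you flagged as the main obstacle, and it matters: the constant in~(\ref{dim}) comes out of $\lambda_1>2\pi/|\Omega|$, since
$\bigl(8\sqrt{3}\,\pi\cdot 2\pi\bigr)^{1/2}=\bigl(16\sqrt{3}\,\pi^2\bigr)^{1/2}=4\cdot 3^{1/4}\pi$,
which converts the first bound in~(\ref{dim}) exactly into $\frac{1}{4\pi 3^{1/4}}\|f\||\Omega|/\nu^2$. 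Your claimed $\lambda_1\ge4\pi/|\Omega|$ would instead produce $1/(4\sqrt{2}\,3^{1/4}\pi)$; that would a fortiori imply the stated inequality if the premise were true, but it is not justified by the corollary you cite, and nothing in the paper supplies it. With the corrected value $2\pi/|\Omega|$ (and noting the strictness $\lambda_1>\mu_1$, which is what makes the second inequality in~(\ref{dim}) strict), your argument closes exactly as the paper's does.
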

\begin{proof}
Since for the proof of~(\ref{dim}) we  need to
use in~\cite[Theorem 4.1]{Ch-I} the new improved constants
in the Lieb--Thirring inequality~(\ref{L-T}) below
and in the lower bound~(\ref{lowerorth}) for $n=2$,
the proof of the theorem  will only be outlined.
The solution semigroup $S_t$ is uniformly
 differentiable in $H$
with differential $L(t,u_0):\xi\to U(t)\in H$,
where $U(t)$ is the solution of the variational equation
\begin{equation}\label{var-eq}
\partial_tU=-\nu AU-
B(U,u(t))-B(u(t),U)=:{\mathcal L}(t,u_0)U, \qquad U(0)=\xi.
\end{equation}

We estimate
the numbers $q(m)$ (the sums of the first $m$
global Lyapunov exponents):
\begin{equation}\label{def-q(m)}
q(m)\le\limsup_{t\to\infty}\ \sup_{u_0\in {\mathcal A}}\ \
\sup_{\{v_j\}_{j=1}^m\in V}
\frac{1}t
\int_0^t \sum_{j=1}^m\bigl({\mathcal L}(\tau,u_0)v_j,v_j\bigr)d\tau,
\end{equation}
where $\{v_j\}_{j=1}^m\in V$ is an arbitrary
orthonormal system of dimension $m$
\cite{B-V},\cite{C-F85},\cite{CF88},\cite{T}.
$$
\aligned
&\sum_{j=1}^m({\mathcal L} (t,u_0)v_j,v_j)\,=\,
-\nu\sum_{j=1}^m\|\nabla v_j\|^2\,-\,\int\sum_{j=1}^m
\sum_{k,i=1}^2
v_j^k\partial_ku^iv_j^idx\,\le\\
-&\nu\sum_{j=1}^m\|\nabla v_j\|^2\,+\, 2^{-1/2}
\int\rho (x)|\nabla u(t,x)|\,dx\,\le\\
-&\nu\sum_{j=1}^m\|\nabla v_j\|^2\,+\,
2^{-1/2}\|\rho\|\|\nabla u\|\,\le\\\,
-&\nu\sum_{j=1}^m\|\nabla v_j\|^2\,+\,
2^{-1/2}
\biggl(c_{\mathrm{LT}}
\sum_{j=1}^m\|\nabla v_j\|^2\biggr)^{1/2}
\|\nabla u(t)\|\,\le\\
-&\frac\nu2\sum_{j=1}^m\|\nabla v_j\|^2\,+\,
\frac{c_\mathrm{LT}}{4\nu}\|\nabla u(t)\|^2\,\le\,
-\frac{\nu c_\mathrm{sp}m^2}{2|\Omega|}\,+\,
\frac{c_\mathrm{LT}}{4\nu}\|\nabla u(t)\|^2\,,
\endaligned
$$
Here we used the inequality
$|\sum_{k,i=1}^2
v^k\partial_ku^iv^i|=
|\nabla u\,v\cdot v|\le
2^{-1/2}|\nabla u||v|^2$~\cite[Lemma~4.1]{Ch-I},
then~(\ref{L-T}), and, finally,
(\ref{lowerorth}), written for $n=2$
and the orthonormal family $\{v_j\}_{j=1}^m\in V$ as follows
\begin{equation}\label{n=2}
\sum_{k=1}^m\|\nabla v_k\|^2\,\ge\,
\frac {c_{\mathrm{sp}}m^2}{|\Omega|}
\,,\qquad c_{\mathrm{sp}}=2\pi.
\end{equation}

Using the well-known estimate
$$
\limsup_{t\to\infty}\sup_{u_0\in\mathcal{A}}
\frac 1t\int_0^t\|\nabla u(\tau)\|^2d\tau\,
\le\,\frac{\|f\|^2}{\lambda_1\nu^2}\,=\,
\lambda_1\nu^2G^2,\qquad
G=\frac{\|f\|}{\lambda_1\nu^2}
$$
for the solutions lying on the attractor we
obtain  for the
numbers $q(m)$:
$$
q(m)\le
-\frac{\nu c_{\mathrm{sp}}m^2}{2|\Omega|}+
\frac{\nu\lambda_1c_{\mathrm{LT}}G^2}4\,.
$$
It was shown in~\cite{C-F85} (see also~\cite{CF88},\cite{T})
and in~\cite{Ch-I}, respectively,
that both the Hausdorff and fractal dimensions of $\mathcal{A}$
are bounded by the number $m_*$ for which
$q(m_*)=0$.
This gives that
$$
\dim_F\mathcal{A}\le
\biggl(\frac{c_{\mathrm{LT}}}{2c_{\mathrm{sp}}}\biggr)^{1/2}
(\lambda_1|\Omega|)^{1/2}G,
$$
which in view of~(\ref{L-T}) and~(\ref{n=2})
 proves the first inequality in~(\ref{dim}),
while the second inequality follows from
(\ref{first}) with $n=2$: $\lambda_1>2\pi/|\Omega|$.

\end{proof}

\begin{theorem}\label{T:L-T}
Let the family
$\{v_j\}_{j=1}^m\in \mathbf{H}_0^1(\Omega)$,
$\Omega\subseteq\mathbb{R}^2$ be orthonormal  and
$\div v_j=0$, $j=1,\dots,m$.
 Then the following inequality holds for
 $\rho(x)=\sum_{k=1}^m|v_k(x)|^2$:
\begin{equation}\label{L-T}
\|\rho\|^2=
\int\biggl(\sum_{j=1}^m|v_j(x)|^2\biggr)^2dx \,\le\,
c_\mathrm{LT}
\sum_{j=1}^m \| \nabla v_j\|^2,\qquad
c_\mathrm{LT}
\le\frac1{2\sqrt{3}}\,.
\end{equation}
\end{theorem}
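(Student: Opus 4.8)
The plan is to reduce (\ref{L-T}) to the dual (variational) form of a two-dimensional Lieb--Thirring inequality and to feed in the divergence-free condition exactly where Lemma~\ref{l:n-1} did, namely through the fact that in Fourier space a divergence-free field is effectively scalar. The key reduction is the ``dual'' bound
\begin{equation*}
\sum_{j=1}^m(Wv_j,v_j)\,\le\,\sum_{j=1}^m\|\nabla v_j\|^2+L\int W^2\,dx ,\qquad W\ge0,
\end{equation*}
with $L=\tfrac1{8\sqrt3}$. Granting this, one writes $\int W\rho=\sum_j(Wv_j,v_j)$ for $\rho=\sum_j|v_j|^2$, substitutes $W=c\rho$, and optimizes: the resulting inequality $(c-Lc^2)\int\rho^2\le\sum_j\|\nabla v_j\|^2$ is strongest at $c=1/(2L)$, where $c-Lc^2=1/(4L)$, giving $\int\rho^2\le 4L\sum_j\|\nabla v_j\|^2=\tfrac1{2\sqrt3}\sum_j\|\nabla v_j\|^2$, which is (\ref{L-T}).

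The dual bound is equivalent, by the variational principle for orthonormal families, to the assertion that the sum of the negative eigenvalues of the quadratic form $\int|\nabla v|^2-\int W|v|^2$ over divergence-free $v$ is at most $L\int W^2$. Here the divergence-free condition must enter as in Lemma~\ref{l:n-1}: since $\xi\cdot\widehat{v_j}(\xi)=0$, one may write $\widehat{v_j}(\xi)=\phi_j(\xi)\,\xi^\perp/|\xi|$ with $\xi^\perp=(-\xi_2,\xi_1)$, trading each two-component field for a single scalar amplitude $\phi_j$ with $\sum_j|\phi_j(\xi)|^2=\sum_j|\widehat{v_j}(\xi)|^2\le(n-1)|\Omega|=|\Omega|$. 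Thus the phase-space density obeys the \emph{scalar} Bessel bound (\ref{sum_scal}) rather than the vector bound (\ref{n}): the factor $n=2$ is replaced by $n-1=1$, and the negative-eigenvalue sum is governed by the scalar two-dimensional Lieb--Thirring constant, not by twice it. I would then invoke the best available value of that constant, $L=\tfrac1{8\sqrt3}$, which is $\pi/\sqrt3$ times the semiclassical value $L^{\mathrm{cl}}_{1,2}=1/(8\pi)$, the $\sqrt3$ entering through a sharp one-dimensional estimate lifted to two dimensions.

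The main obstacle is the directional factor $\xi^\perp/|\xi|$ surviving the reduction: in the double Fourier integral for $|v_j(x)|^2$ the amplitudes are weighted by $\cos(\theta_\xi-\theta_\eta)$, so the problem is not literally scalar. One must verify that this weight does not degrade the scalar constant. It cannot worsen the crude bound, since $|\cos|\le1$ and restriction to the divergence-free subspace only raises eigenvalues; but that crude route loses the factor $2$, so the delicate point is to keep the constant sharp by carrying the scalar two-dimensional estimate \emph{through} the Fourier-perpendicular representation, letting Lemma~\ref{l:n-1} supply the density bound precisely at the one step where the dimension is counted. The remaining ingredients---the Plancherel identities already used in the proof of Theorem~\ref{T:main}, the passage from the eigenvalue sum to the dual bound, and the optimization in $c$---are routine.
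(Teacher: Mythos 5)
Your outer scaffolding is sound and matches what actually lies behind the paper's proof: the Legendre-duality step (test the quadratic form of $-\Delta-W$ on the orthonormal family, substitute $W=c\rho$, optimize in $c$) correctly converts a bound $L$ on the sum of negative eigenvalues of the divergence-free problem into $c_\mathrm{LT}\le 4L$, and $4\cdot\frac1{8\sqrt3}=\frac1{2\sqrt3}$ is the right arithmetic. The paper itself does not reprove any of this: it cites \cite{Ch-I}, \cite{I-MS2005} for the reduction $c_\mathrm{LT}\le 4\mathrm{L}_{1,2}$, where $\mathrm{L}_{1,2}$ is the \emph{scalar} Lieb--Thirring constant, and then inserts the bound $\mathrm{L}_{1,2}\le \frac{\pi}{\sqrt3}\,\mathrm{L}_{1,2}^{\mathrm{cl}}=\frac1{8\sqrt3}$ from \cite{D-L-L}.

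The genuine gap is the middle step, which you flag yourself but never close: the claim that the divergence-free (vector) problem is governed by the scalar constant $\mathrm{L}_{1,2}$ rather than by $2\mathrm{L}_{1,2}$. Writing $\widehat{v}(\xi)=\phi(\xi)\,\xi^\perp/|\xi|$ does make the kinetic form scalar, but the potential term does not become multiplication by $W$: it becomes the nonlocal operator $(R^\perp)^*WR^\perp$ ($R^\perp$ the perpendicular Riesz transform), whose kernel carries exactly the angular factor $\cos(\theta_\xi-\theta_\eta)$ you mention. Neither of the two routes you offer suffices. The min--max route (``restriction to the divergence-free subspace only raises eigenvalues'') compares with the unconstrained vector operator, which is two decoupled scalar copies, and so gives only $L\le 2\mathrm{L}_{1,2}$, i.e.\ $c_\mathrm{LT}\le 1/\sqrt3$ --- a factor $2$ worse than claimed. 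The other route, ``carrying the scalar estimate through the Fourier-perpendicular representation,'' is precisely the nontrivial assertion to be proved: it works for proofs organized around traces of powers of the Birman--Schwinger kernel, where the rank-one projection $\xi^\perp\otimes\xi^\perp/|\xi|^2$ has trace $1$ and the cosines only help, but the value $\frac1{8\sqrt3}$ comes from the Dolbeault--Laptev--Loss argument (a sharp one-dimensional bound lifted by induction on dimension with operator-valued potentials), and you give no argument that that particular proof survives the sandwiching by Riesz transforms. This is the entire analytic content of the theorem; it is exactly the part the paper delegates to \cite{Ch-I}, \cite{I-MS2005}, where the reduction is established for the true scalar constant $\mathrm{L}_{1,2}$, so that any subsequent improvement of $\mathrm{L}_{1,2}$ can simply be inserted. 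Finally, your hope that Lemma~\ref{l:n-1} supplies the missing ingredient is misplaced: that lemma feeds the Berezin--Li--Yau kinetic-energy bound of Theorem~\ref{T:main}, and plays no role in inequalities of type~(\ref{L-T}), which are equivalent to negative-eigenvalue-sum bounds and require Birman--Schwinger-type input instead.
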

\begin{proof}
It was proved in~\cite{Ch-I}, \cite{I-MS2005} that
the best (by notational definition) constant
$c_\mathrm{LT}$
in~(\ref{L-T}) satisfies
$$
c_\mathrm{LT}
\le 4\mathrm{L}_{1,2},
$$
where the constant $\mathrm{L}_{1,2}$ comes from the
Lieb--Thirring spectral estimate \cite{L--T}
$$
\sum_{\mu_j<0}|\mu_j|^\gamma\le \mathrm{L}_{\gamma,n}
\int_{\mathbb{R}^n} f(x)^{\gamma+n/2}dx
$$
for the negative eigenvalues of the scalar
Schr\"odinger operator $-\Delta-f$
in $\mathbb{R}^n$, $f\ge0$.
For $\mathrm{L}_{\gamma,n}$ we always have
$$
\mathrm{L}_{\gamma,n}\ge
\mathrm{L}_{\gamma,n}^{\mathrm{cl}}:=
\frac{\Gamma(\gamma+1)}{(4\pi)^{n/2}\Gamma(\gamma+n/2+1)}\,.
$$

It was recently shown in~\cite{D-L-L} that for $n\ge1$
$$
\mathrm{L}_{\gamma,n}\le R\cdot
\mathrm{L}_{\gamma,n}^{\mathrm{cl}},
\qquad R=\pi/\sqrt{3}=1.8138\dots\,,
\qquad\gamma\ge1,
$$
which improves the previous important estimate
$\mathrm{L}_{\gamma,n}\le 2
\mathrm{L}_{\gamma,n}^{\mathrm{cl}}$
established in~\cite{H-L-W}.
Hence
$c_\mathrm{LT}\le
4\, R\, \mathrm{L}_{1,n}^{\mathrm{cl}}
=1/(2\sqrt{3})$.
The proof is complete.
\end{proof}

\begin{remark}\label{Rem:L-T}
{\rm The idea to use Lieb--Thirring inequalities
in the context of the Navier--Stokes equations~\cite{Lieb}
has led to estimates of dimension that are linear with
respect to the Grashof number $G$~\cite{T85}. First explicit
estimates for the dimension of the attractors were
obtained in~\cite{I96} and improved in~\cite{Ch-I}. The
explicit constants in~(\ref{dim}) are further improvements
(by the factor $(2\cdot(2/R))^{1/2}=1.485\dots$) of the
corresponding constants in~\cite{Ch-I}.
}
\end{remark}

\section*{Acknowledgments}
The author acknowledges helpful
discussions with V.V.\,Chepyzhov and Yu.G.\,Rykov.


\begin{thebibliography}{99}
\bibliographystyle{amsplain}

\bibitem{Babenko}\textrm{Babenko~K.I.}
\textrm{ On the asymptotic behavior of the eigenvalues
of linearized Navier--Stokes equations.}
\textit{Dokl. Akad. Nauk SSSR}
\textbf{263} (1982), 521--525.
\textrm{English transl.}
\textit{Soviet Math. Dokl.}
\textbf{25} (1982), 359--364.




\bibitem{B-V}\textrm { Babin~A.V and  Vishik~M.I.}
\textit {Attractors of Evolution Equations}.
\textrm Nauka,
\textrm Moscow,
\textrm 1988;
\textrm {English transl.}
\textrm{North-Holland, Amsterdam},
\textrm{1992}.


\bibitem{Ch-I}\textrm{Chepyzhov~V.V.  and Ilyin~A.A.}
\textrm{On the fractal dimension of invariant sets; applications to
Navier--Stokes equations}.
\textit{Discrete and Continuous Dynamical Systems}
\textbf{10}, nos.1\&2  (2004) 117--135.











\bibitem{C-F85}\textrm{Constantin~P. and Foias~C.}
\textrm{ Global Lyapunov exponents, Kaplan--Yorke formulas
and the dimension of the attractors for the 2D
Navier--Stokes equations}.
\textit{ Comm. Pure Appl. Math.}
\textbf{38}  (1985), 1--27.




\bibitem{CF88}\textrm{Constantin~P. and Foias~C.}
\textit{ Navier-Stokes Equations}.
\textrm{ The University of Chicago Press}, 1988.




\bibitem{D-L-L}\textrm{Dolbeault~J., Laptev~A., and Loss~M.},
\textrm{Lieb--Thirring inequalities with improved
constants.}
\textit{J. European Math. Soc.}
\textrm{to appear};
\textrm{arXiv:0708.1165v2 [math.AP]}.






\bibitem{FMRT} \textrm {Foias~C., Manley~O., Rosa~R.
and Temam~R.}
\textit{Navier--Stokes Equations and Turbulence}.
\textrm{Cambridge, UK},
\textrm{Cambridge Univ. Press}, 2001.

\bibitem{H-L-W} \textrm{Hundertmark~D., Laptev~A.,
and Weidl~T.}
\textrm{ New bounds on the
Lieb--Thirring  constants}.
\textit{ Inventiones Mathematicae}
\textbf{140} (2000), 3, 693--704.



\bibitem{G-M-T}\textrm{Ghidaglia~ J. M., Marion~M.
and Temam~R.}
\textrm{Generalization of the Sobolev--Lieb--Thirring inequalities
 and applications to the dimension of attractors}.
\textit{Differential and Integral Equations}
\textbf{1}  (1988), 1--21.



\bibitem{I96} \textrm{Ilyin~A.A.}
\textrm{ Attractors for Navier--Stokes equations in domains
with finite measure.}
\textit{Nonlinear Anal.}
\textbf{27}, 605--616  (1996).

\bibitem{I-MS2005}\textrm{Ilyin~A.A.}
\textrm{ Lieb--Thirring integral inequalities and their applications
to attractors of the Navier--Stokes equations.}
\textit{ Mat. Sbornik}
\textbf{196}:1   (2005), 33-66;
\textrm{English transl. in}
\textit{Sb. Math.} \textbf{196}:1 (2005).


\bibitem{LadNon}\textrm{Ladyzhenskaya~O.A.}
\textrm{ First boundary value problem for
Navier--Stokes equations in domain with non
smooth boundaries.}
\textit{C. R. Acad. Sc. Paris}
\textbf{314}, {\rm serie 1}
(1992), 253--258.






\bibitem{Lad}\textrm{Ladyzhenskaya~O.A.}
\textit{The Mathematical Theory
of Viscous Incompressible Flow,}
\textrm{Nauka, Moscow}
1970;
\textrm {English transl.}
\textrm{ Gordon and Breach},
\textrm{New York}
1969.


\bibitem{Li-Yau}\textrm{Li P. and Yau S.--T.}
\textrm{On the Schr\"odinger equation and the
eigenvalue problem.}
\textit{Commun. Math. Phys.}
\textbf{8} (1983),  309--318.



\bibitem{Lieb}\textrm{Lieb~E.}
\textrm{ On characteristic exponents in turbulence.}
\textit{Commun.Math. Phys.}
\textbf{92} (1984), 473--480.



\bibitem{L--T} \textrm{Lieb E. and  Thirring W.}
\textrm{ Inequalities for the moments of the
eigenvalues of the Schr\"o\-dinger hamiltonian and their relation to
Sobolev inequalities, Studies in Mathematical Physics. Essays in
honor  of Valentine Bargmann,}
\textrm{ Princeton University Press}  (1976), 269--303.





\bibitem{Metiv}\textrm{Metivier G.}
\textrm{Valeurs propres des op\'erateurs
definis sur la restriction de systems variationnels \`a des
 sous--espases.}
\textit{ J. Math. Pures Appl.}
\textbf{57} (1978), 133--156.

\bibitem{Rosa}\textrm{Rosa~R.}
\textrm{The global attractor for the 2D Navier--Stokes flow
on some unbounded domains.}
\textit{Nonlinear Anal.}
\textbf{32} (1998), 71--85.


\bibitem{TNS}\textrm{Temam~R.}
\textit{Navier--Stokes Equations. Theory
and Numerical Analysis,}
\textrm{Amsterdam},
\textrm{North-Holland},
1984.


\bibitem{T85}\textrm{Temam~R.}
\textrm{Attractors for Navier--Stokes equations}.
\textit{Research Notes in Mathematics}
\textbf{122} (1985), 272--292.



\bibitem{T}\textrm{Temam~R.}
\textit{Infinite Dimensional Dynamical Systems in
Mechanics and Physics, \rm 2nd Edition}.
\textrm{New York},
\textrm{Sprin\-ger-Ver\-lag},
1997.











\end{thebibliography}
\end{document}